\numberwithin{equation}{section}
\numberwithin{figure}{section}
\newtheorem{theorem}{Theorem}[section]
\newtheorem{proposition}[theorem]{Proposition}
\newtheorem{lemma}[theorem]{Lemma}
\theoremstyle{definition}
\definecolor{myblue}{rgb}{0.6, 0.9, 1}
\newcommand{\Rmnum}[1]{\expandafter\@slowromancap\romannumeral #1@}
\definecolor{myblue}{rgb}{0.6, 0.9, 1}
\definecolor{mygreen}{rgb}{0,0,1}
\definecolor{purple}{rgb}{0.6,0.2,1}
\definecolor{orange}{rgb}{0.8,0,0.2}
\newcommand{\bC}{\mathbb{C}}
\newcommand{\bP}{\mathbb{P}}
\newcommand{\Kbar}{\overline{K}}
\newcommand\gf[2]{\genfrac{}{}{0pt}{}{#1}{#2}}
\author{Niki Myrto Mavraki, Harry Schmidt and Robert Wilms}
\email{mavraki@math.harvard.edu}
	\email{harry.schmidt@unibas.ch}
\email{robert.wilms@unibas.ch}
\title{Height coincidences in products of the projective line}
\begin{document}
\begin{abstract}
 We consider hypersurfaces in $\bP_1^n$ that contain a generic sequence of small dynamical height with respect to a split map and project onto  $n-1$ coordinates. We show that these hypersurfaces satisfy strong coincidence relations between their points with zero height coordinates. More precisely, it holds that in a Zariski-open dense subset of such a hypersurface $n-1$ coordinates have height zero if and only if all coordinates have height zero. This is a key step in the resolution of the dynamical Bogomolov conjecture for split maps. 
\end{abstract}
\maketitle

\section{Introduction} 

In this note we prove a coincidence relation of heights for hypersurfaces in products of the projective line, which contain many points of small canonical height. 
Let $K$ be a number field or function field of a smooth projective curve over an algebraically closed field. For $i=1,\ldots,n$ let $f_i:\bP_1\to \bP_1$ be degree $d_i\ge 2$ morphisms of the projective line defined over $K$. Set $\Phi=(f_1,\ldots,f_n):\bP_1^n\to \bP_1^n$ and write $$\hat{h}_{\Phi}(z_1,\ldots,z_n)=\sum_{i=1}^n\hat{h}_{f_i}(z_i),$$
where $\hat{h}_{f_i}$ denotes the canonical height on $\mathbb{P}_1$ associated to $f_i$ defined by Call and Silverman \cite{callsilverman}. Our main result shows that the existence of a generic small sequence of points in a hypersurface  with respect to the height $\hat{h}_{\Phi}$ implies coincidence relations between the coordinates of points with height zero. Note that a sequence is generic in a variety $X$ if none of its subsequences is contained in a proper algebraic subvariety of $X$.


\begin{theorem}\label{main} Let $H \subseteq \mathbb{P}_1^{n}$ be an irreducible hypersurface and let $j\in \{1,\ldots,n\}$ be such that $p_j(H)=\bP_1^{n-1}$ for the projection $p_j\colon \mathbb{P}_1^n\to\mathbb{P}_1^{n-1}$ forgetting the $j$-th factor. Suppose that there exists a generic sequence $\{x_{\ell}\}\subseteq H(\overline{K})$ with $\hat{h}_{\Phi}(x_{\ell}) \to 0$ as $\ell\to\infty$. 
Then there exists a closed subset $E_j \subsetneq H$ such that 
\begin{align*}
\{x\in H\setminus E_j(\Kbar)|\hat{h}_{f_i}(\pi_i(x))=0 \text{ for all } i\in \{1,\dots,n\}\setminus\{j\}\}=\{x\in H\setminus E_j(\Kbar)|\hat{h}_{\Phi}(x)=0\},
\end{align*} 
where $\pi_i: H\to \bP_1$ denotes the projection to the $i$-th coordinate.
More precisely,
\begin{align*}  
E_j(\overline{K}) = \{(x_1, \dots, x_n)\in H(\overline{K})~|~ (x_1,  \dots, x_{j-1})\times \mathbb{P}_1(\overline{K})\times (x_{j+1}, \dots, x_n)\subseteq H(\overline{K}) \}.
\end{align*} 
In particular, if $x \in H(\overline{K})$ satisfies $\hat{h}_{f_i}(\pi_i(x)) = 0$ for all $i \neq j$, then there exists $y \in H(\overline{K})$ such that $\hat{h}_\Phi(y) = 0$ and $p_j(y) = p_j(x)$. 
 	\end{theorem}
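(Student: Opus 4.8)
The plan. Write $p:=p_j$ and $\hat h_{\Phi'}:=\sum_{i\ne j}\hat h_{f_i}$, so that $\hat h_\Phi=\hat h_{\Phi'}\circ p+\hat h_{f_j}\circ\pi_j$ with both summands non‑negative; the inclusion ``$\supseteq$'' of the asserted equality is then immediate. I would first dispose of the easy reductions. The stated description makes $E_j$ a \emph{proper} closed subset of $H$: writing $H=\{F=0\}$ with $F$ irreducible, the hypothesis $p(H)=\mathbb P_1^{n-1}$ is equivalent to $F$ actually involving the $j$‑th variable, and then irreducibility forces the coefficients of $F$, viewed as a form in the $j$‑th pair of variables, to have no common factor, so their common zero locus $Z\subseteq\mathbb P_1^{n-1}$ has codimension $\ge 2$ and $E_j=p^{-1}(Z)$ has dimension $\le n-2<\dim H$. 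Granting the nontrivial inclusion ``$\subseteq$'', the final ``In particular'' clause follows: for $x$ with $\hat h_{f_i}(\pi_i(x))=0$ for all $i\ne j$, either $x\notin E_j$ and then $y=x$ works, or $x\in E_j$, the fibre $\{x_1\}\times\cdots\times\mathbb P_1\times\cdots\times\{x_n\}$ lies in $H$, and $y=(x_1,\dots,\xi,\dots,x_n)$ for any $\overline K$‑rational fixed point $\xi$ of $f_j$ gives $\hat h_\Phi(y)=0$ and $p(y)=p(x)$. So the whole content is: if $x\in H(\overline K)\setminus E_j$ and $\hat h_{f_i}(\pi_i(x))=0$ for all $i\ne j$, then $\hat h_{f_j}(\pi_j(x))=0$.

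Next I would bring in equidistribution. Note $\pi_i|_H$ is dominant for every $i\ne j$ (and for $i=j$ as well, unless $\pi_j(H)$ is a point --- a case in which $\hat h_{f_j}\circ\pi_j\equiv 0$ on $H$ and we are done). Since $\{x_\ell\}$ is generic in $H$, genericity forces $\{\pi_i(x_\ell)\}$ to be a generic sequence in $\mathbb P_1$, and $\hat h_{f_i}(\pi_i(x_\ell))\le\hat h_\Phi(x_\ell)\to 0$. By Yuan's arithmetic equidistribution theorem (and its function‑field counterparts due to Gubler, Faber and Yuan--Zhang), at each place $v$ the Galois orbits of $x_\ell$ equidistribute on $H(\mathbb C_v)$ toward $\mu^H_v:=\deg_{\overline{\mathcal L}}(H)^{-1}c_1(\overline{\mathcal L}|_H)^{\wedge(n-1)}$, where $\overline{\mathcal L}=\bigotimes_m\pi_m^*\overline{\mathcal O(1)}_{f_m}$ carries the Call--Silverman metrics; likewise the orbits of $\pi_i(x_\ell)$ equidistribute toward the canonical measure $\mu_{f_i,v}$. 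Pushing forward gives $(\pi_i)_*\mu^H_v=\mu_{f_i,v}$ for every dominant $\pi_i$. Using $c_1(\pi_m^*\overline{\mathcal O(1)}_{f_m})^{\wedge 2}=0$, one expands $c_1(\overline{\mathcal L}|_H)^{\wedge(n-1)}=(n-1)!\sum_{k=1}^n\big(\bigwedge_{m\ne k}\pi_m^*\omega_{f_m}\big)\big|_H$, whose $k=j$ term is $\deg(p)\cdot p^*\big(\bigwedge_{i\ne j}\omega_{f_i}\big)$.

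Then comes the reduction to curves. Fix $i_0\ne j$ and let $q\colon\mathbb P_1^n\to\mathbb P_1^{n-2}$ forget the $i_0$‑th and $j$‑th coordinates. For $c$ outside a proper closed set the fibre $H_c:=q^{-1}(c)\cap H\subseteq\mathbb P_1^2$ (coordinates $i_0,j$) is a curve, and any component of it surjecting onto the $i_0$‑th factor is not a vertical line $\{\mathrm{pt}\}\times\mathbb P_1$; the assumption $x\notin E_j$ is exactly what guarantees that the component $C$ of $H_{q(x)}$ through $x$ is of this kind (else a $j$‑axis line through $x$ would lie in $H$). Disintegrating $\mu^H_v$ along $q$ and feeding in the pushforward identities, the fibral probability measure $\mu^{H_c}_v$ on $H_c$ has one marginal $\mu_{f_{i_0},v}$ and the other a pushforward of $\mu_{f_j,v}$; so, for the relevant $c$, $H_c$ carries at every place a measure with canonical marginals. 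By the rigidity of canonical measures on $\mathbb P^1$ (Levin; Baker--DeMarco; Ji--Xie; Ye) together with the dynamical Manin--Mumford/Bogomolov statement for curves in $\mathbb P_1\times\mathbb P_1$, this forces each such $C$ to be ``dynamically special'' --- essentially cut out by a semiconjugacy between iterates --- whence $\hat h_{f_j}\circ\pi_j=\lambda\,\hat h_{f_{i_0}}\circ\pi_{i_0}$ holds identically on $C$ with $\lambda\in[0,\infty)$. Applying this to the $C$ through our $x$ and using $\hat h_{f_{i_0}}(\pi_{i_0}(x))=0$ yields $\hat h_{f_j}(\pi_j(x))=0$, as wanted.

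The hard part. The crux is this last step: transporting the single generic small sequence on the $(n-1)$‑dimensional hypersurface $H$ down to enough smallness/genericity on the slice‑curves $H_c$ --- equivalently, verifying that $\mu^{H_c}_v$ really is the canonical measure of $H_c$ for (almost) every relevant $c$, in particular for the \emph{special} $c=q(x)$ with all coordinates preperiodic --- and then checking that the proportionality constant $\lambda$ does not vary with $c$ (a monodromy/connectedness argument, using that $H$ is irreducible). This forces one to understand the canonical measure $\mu^H_v$ of the hypersurface itself in pluripotential‑theoretic (Arakelov) terms --- roughly, the ``defect'' of $H$ from being preperiodic under $\Phi$ --- and to combine that with the split structure of $\Phi$ and the curve‑level rigidity above. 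It is also precisely here that $E_j$ must be removed: over the locus $Z$ the $j$‑th coordinate is entirely unconstrained by $H$, so no relation of this type can hold there. (An induction on $n$ with the curve case as base is an alternative organization, but the inductive step runs into the very same slice‑transfer difficulty.)
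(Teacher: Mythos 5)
Your reductions at the start are fine (the properness of $E_j$ via the coprimality of the coefficients of the defining form, and the derivation of the ``in particular'' clause), and the first use of equidistribution --- that $\overline{L}^{\,n}=0$ and that the Galois orbits of $x_\ell$ equidistribute on $H$ with $(\pi_i)_*\mu^H_v=\mu_{f_i,v}$ --- matches what the paper does. But the core of your argument, the ``reduction to curves,'' has a genuine gap that you yourself flag and do not close. Knowing the marginals of the \emph{total} measure $\mu^H_v$ says nothing about the marginals of the \emph{conditional} measures $\mu^{H_c}_v$ on individual fibres of $q$: the pushforward identity $(\pi_{i_0})_*\mu^H_v=\mu_{f_{i_0},v}$ only constrains the average of the fibral marginals over $c$, and even an almost-everywhere statement would not reach the one fibre you actually need, namely the special slice $c=q(x)$ through a point all of whose coordinates other than the $j$-th have height zero. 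There is no mechanism in your write-up that transports either smallness or genericity from $H$ to $H_{q(x)}$. Moreover, the curve-level input you invoke --- that a curve in $\mathbb{P}_1^2$ carrying canonical measures at every place is cut out by a semiconjugacy, whence the two coordinate heights are proportional on it --- is the dynamical Bogomolov/Manin--Mumford statement for curves under split maps, which is strictly deeper than Mimar's $n=2$ case of the present theorem and is in the literature \emph{deduced from} statements of exactly this type; relying on it risks circularity.

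The paper closes precisely this gap by a different, purely arithmetic-intersection-theoretic mechanism, and you may find it instructive to compare. Instead of disintegrating measures, one inducts on $n$: fixing a coordinate $a_n$ with $\hat{h}_{f_n}(a_n)=0$ cuts out the divisor $\mathrm{div}(\pi_n^*1_{a_n})=H\cap\{x_n=a_n\}$, and the Chambert-Loir--Thuillier induction formula expresses $(\overline{L}^{\,n-1}|\mathrm{div}(\pi_n^*1_{a_n}))$ as $\overline{L}^{\,n-1}\cdot\pi_n^*\overline{L}_n$ plus a sum of local integrals $\int\log\|\pi_n^*1_{a_n}\|\prod_{j\neq k}c_1(\pi_j^*\overline{L}_{j,v})$. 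Equidistribution is used only to prove that the $(n-1)$-fold products of Chern forms for different omitted indices $k$ are \emph{proportional} to one another, which reduces all these integrals to the single one with $k=n$, and that one vanishes by another application of the induction formula together with the easy direction of Zhang's inequality. The hard direction of Zhang's inequality then converts $(\overline{L}^{\,n-1}|\mathrm{div}(\pi_n^*1_{a_n}))=0$ into the existence of a generic small sequence on the irreducible component of the slice through the given point (the hypothesis $x\notin E_j$ is used exactly to ensure that component still dominates under $p_{n-1,1}$), and induction on $n$ finishes. So the ``slice-transfer difficulty'' you identify is resolved not by measure-theoretic disintegration but by the converse of the fundamental inequality applied to a divisor of vanishing arithmetic degree; as written, your proposal does not contain an argument for this step.
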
    
 
Our strategy is to interpret points as the intersection of divisors obtained by fixing one coordinate and to use the induction formula by Chambert-Loir and Thuillier \cite{CLTexposition} in the setting of adelic line bundles to compute their heights inductively. To facilitate the induction step we show that the integrals in the induction formula vanish. For this purpose, we apply the equidistribution result by Yuan \cite{BiglinebundlesYuan} and Gubler \cite{Gubler:equidistribution}. We also rely on the fundamental inequality by Zhang \cite{smallpointszhang} and Gubler \cite{bogomolovgubler}, which allows us to translate between the existence of a generic sequence of small points and the vanishing of the top self-intersection of the polarizing adelic line bundle.

For the result it is necessary to take out the exceptional set $E_j$ and the explicit choice of $E_j$ in the theorem is optimal since in this set the $j$-th coordinate is independet of the other coordinates, such that the assertion cannot hold for points in $E_j$. One cannot expect $E_j(\overline{K})$ to be empty in general. For example if $H\subseteq\mathbb{P}_1^3$ is given by the equation $x_1y_1z_1=x_0y_0z_0$ and every $f_i$ is the square map, then $\{0\}\times \{\infty\}\times \mathbb{P}_1(\overline{K})\subseteq E_3(\overline{K})$. 

For $n=2$ the theorem has already been proved by Mimar \cite{mimar} using a different strategy. He relies on the arithmetic Hodge index theorem by Faltings and Hriljac. For $n>2$ Ghioca, Nguyen and Ye \cite[Proposition 5.2]{dynamicalmm} claimed a proof of Theorem \ref{main} for a number field $K$ using the more involved arithmetic Hodge index theorem for adelic line bundles by Yuan and Zhang \cite{arithmetichodgeindexyuanzhang}. Their proof relies on the existence of a non-trivial linear combination of certain line bundles, which is numerically trivial; see \cite[Claim 5.4]{dynamicalmm}. But the latter appears to be in contradiction with the Lefschetz hyperplane theorem; see for example \cite[Theorem 5.21]{arithmetichodgeindexyuanzhang}. The statement of \cite[Proposition 5.2]{dynamicalmm} also differs from our Theorem \ref{main} and would imply that the set $E_j$ is empty if all $p_j$ are surjective. However, this is, for example, in disagreement  with the hypersurface $H$ and the exceptional set $E_3 \subseteq H$  mentioned in the previous paragraph. 

In fact the existence of a generic small sequence in $H$ should impose much stronger rigidity conditions on $H$ than the coincidence relation in Theorem \ref{main} as predicted by the dynamical Bogolomov and Manin--Mumford conjectures of Zhang \cite[Conjecture 4.1.7]{Zhang:distributions}, Ghioca--Tucker \cite[Conjecture 1.2]{GhiocaTucker:reformulation} and Gauthier--Vigny \cite[Conjecture 1]{GVgeometric}. Indeed, if $K$ is a number field, combining Theorem \ref{main} with the complex analytic result of Ghioca--Nguyen--Ye \cite[Theorem 6.1]{dynamicalmm} yields that the dynamical Bogomolov conjecture holds for split maps on $\bP_1^n$ as explained in \cite[Theorems 1.2, 1.3]{dynamicalmm}. If $K$ is a function field with characteristic zero and the maps $f_i$ are not isotrivial, one can use Theorem \ref{main} to infer that the geometric dynamical Bogomolov holds in this setting, as explained in work of the first and second author \cite[Theorem 4.1]{mavraki2022dynamical}.

\section{Preliminaries}
\label{section-preliminaries}
Let us first clarify some notations and recall some results from (arithmetic) intersection theory. Let $K$ be a function field of a smooth projective curve $B$ or a number field, in which case we set $B=\mathrm{Spec}(\mathcal{O}_K)$. We write $M(K)=M(K)_0\cup M(K)_\infty$, where $M(K)_0$ denotes the set of non-archimedean places of $K$ and $M(K)_\infty$ is the set of complex embeddings $K\to \mathbb{C}$ if $K$ is a number field and $M(K)_\infty=\emptyset$ if K is a function field. For every $v\in M_K$ we write $K_v$ for the completion of $K$ with respect to $v$ and we fix an algebraic closure $\overline{K}_v$ of $K_v$. If $v\in M(K)_0$, we write $\mathcal{O}_{\overline{K}_v}$ for the integers of $\overline{K}_v$. Further, we fix a collection of absolute values $|\cdot|_v$ on $K_v$ satisfying the product formula $\prod_{v\in M(K)} |x|_v=1$ for $x\in K\setminus\{0\}$.

Let $X$ be a projective variety over $K$ and $L$ a line bundle on $X$. We denote $X_v$ and $L_v$ for their pullbacks induced by the canonical map $K\to \overline{K}_v$. A metric on $L_v$ is a collection of $\overline{K}_v$-norms on each fiber $L_v(x)$ for $x\in X_v(\overline{K}_v)$. If $v\in M(K)_0$, some of the most important examples of metrics are induced by models. For any $e\in \mathbb{Z}_{>0}$ and any projective flat model $(\widetilde{X_v},\widetilde{L_v})$ of $(X_v,L_v^{\otimes e})$ over $\mathrm{Spec}(\mathcal{O}_{\overline{K}_v})$ we may define the metric $\|\cdot\|_{\widetilde{L_v}}$ on $L_v$ by setting
$$\|\ell\|_{\widetilde{L_v}}=\inf_{a\in \overline{K}_v}\left\{|a|_v^{1/e}~|~\ell\in a \widetilde{L_v}(\widetilde{x})\right\}$$
for all $x\in X_v(\overline{K}_v)$ and all $\ell\in L_v(x)$. Here $\widetilde{x}\in \widetilde{X_v}(\mathcal{O}_{\overline{K_v}})$ denotes the unique extension of $x$. A metric $\|\cdot \|_v$ on $L_v$ is called continuous and bounded if $\log\frac{\|\cdot\|_v}{\|\cdot\|_{\widetilde{L_v}}}$ is continuous and bounded for some model $(\widetilde{X_v},\widetilde{L_v})$.
 
We call a collection $\|\cdot\|=\{\|\cdot\|_v~|~v\in M(K)\}$ of bounded, continuous and $\mathrm{Gal}(\overline{K}_v/K_v)$-invariant metrics an adelic metric, if there is a non-empty open subset $U\subseteq B$ and a model $(\widetilde{X},\widetilde{L})$ of $(X,L)$ over $U$, such that $\|\cdot \|_v$ is the model metric induced by the pullback of $(\widetilde{X},\widetilde{L})$ along $\mathrm{Spec}(\mathcal{O}_{\overline{K}_v})\to U$ for every $v\in U$. The pair $\overline{L}=(L,\|\cdot\|)$ is called an adelic metrized line bundle.
We are mainly interested in adelic metrics obtained by limits of model metrics. Therefore, we call $\overline{L}=(L,\|\cdot\|)$ semipositive, if there is a sequence of models $(\widetilde{X}_n,\widetilde{L}_n)$ of $(X,L^{\otimes e_n})$ over $B$ with $\widetilde{L}_n^{\otimes k_n}$ generated by global sections for some $k_n>0$, such that for all $v\in M(K)_{0}$ the sequence $\log\frac{\|\cdot\|_{\widetilde{\mathcal{M}}_{n.v}}}{\|\cdot\|_v}$ converges to $0$ uniformly in $X(\overline{K}_v)$ and for each $v\in M(K)_{\infty}$ its curvature form $c_1(L,\|\cdot\|_v)=\frac{\partial \bar{\partial}}{\pi i}\log\|s\|_v+\delta_{\mathrm{div}(s)}$ is semipositive in the sense of currents. Here $s$ is any non-zero meromorphic section of $L(\bC)$. We call $\overline{L}$ nef if it is semipositive and the models $(\widetilde{X}_n,\widetilde{L}_n)$ as above can be chosen and equipped with continuous hermitian metrics $\|\cdot\|_{n,v}$ with semipositive curvature form at each $v\in M(K)_\infty$, such that $\|\cdot\|_{n,v}^{1/e_n}$ uniformly converges to $\|\cdot\|_v$ for all $v\in M(K)_\infty$ and $\widetilde{L}_n$ has non-negative (arithmetic) degree restricted to any $1$-dimensional integral subscheme of $\widetilde{X}_n$. 

Next, we describe the semipositive metrics induced by polarized dynamical systems. Let $f\colon X\to X$ be a morphism and $L$ a polarization of $f$, that is an ample line bundle on $X$ such that $f^*L\cong L^{\otimes d}$ for some  $d>1$. We choose a model $(\widetilde{X},\widetilde{L})$ of $(X,L)$ over $B$. The normalization of the composition $X\xrightarrow{f^{\circ m}}X\to \widetilde{X}$ defines a model $\widetilde{X}_m$ of $X$ over $B$ together with a map $f_m\colon \widetilde{X}_m\to \widetilde{X}$ and we set $\widetilde{L}_m=f_m^*\widetilde{L}$. Then $(\widetilde{X}_m,\widetilde{L}_m)$ is a model of $(X,L^{d^m})$ and they define a sequence of model metrics $\|\cdot\|_m$ on $L$ converging to an semipositive adelic metric $\|\cdot\|$. For $v\in M(K)_\infty$ we choose $\|\cdot\|_{0,v}$ on $\widetilde{L}_0$ to be any smooth hermitian metric with semipositive curvature form and we set $\|\cdot\|_{m+1,v}=\frac{1}{d}f^{*}\|\cdot\|_{m,v}$. The adelic metric $\|\cdot\|$ only depends on the isomorphism $f^*L\cong L^{\otimes d}$. The induced adelic metrized line bundle $\overline{L}=(L,\|\cdot\|)$ is nef, see for example \cite[Theorem 6.1.1]{yuan2021adelic}.

Now, let $Z\subseteq X$ be a subvariety of dimension $d=\dim Z$ and $\overline{L}_0,\dots,\overline{L}_d$ semipositive line bundles on $X$. Further, let $s_i$ be a rational section of $L_i$ for every $i$, such that their divisors have no common intersection in $Z$. For any $v\in M(K)$ we define their local intersection number inductively by 
\begin{align}\label{equ_CLT}
\log\|\langle s_0,\dots,s_d|Z\rangle\|_v=\log\|\langle s_0,\dots,s_{d-1}|Z\cap \mathrm{div}(s_d)\rangle\|_v+\int_{Z_{\overline{K}_v}^{\mathrm{an}}}\log\|s_d\|_v \prod_{i=0}^{d-1}c_1(\overline{L}_{i,v}),
\end{align}
where $\langle s_0,\dots,s_d|Z\rangle$ can also be considered as the induced section of the Deligne pairing of $\overline{L}_0,\dots,\overline{L}_d$. We have to make sense of the integral in the case $v\in M(K)_0$.
If $\overline{L}_0,\dots,\overline{L}_n$ are given by models $(\widetilde{X},\widetilde{L}_j)$ of $(X,L_j^{\otimes e})$ over $B$, we can write $\widetilde{s}_d$ for the section of $\widetilde{L}_d$ extending the section $s_d^{\otimes e}$ of $L_d^{\otimes e}$. Then $V=\mathrm{div}(\widetilde{s}_d)-e\cdot\overline{\mathrm{div}(s_d)}^{\mathrm{Zar}}$ is a Weil divisor $V=\sum_{v\in M(K)_0} V_v$ supported in the closed fibres of $\widetilde{X}$ and the integral is defined by the classical intersection number
\begin{align*}
	\int_{Z_{\overline{K}_v}^{\mathrm{an}}}\log\|s_d\|_v \prod_{i=0}^{d-1}c_1(\overline{L}_{i,v})=\frac{1}{e^d}c_1(\widetilde{L}_0)\dots c_1(\widetilde{L}_{n-1})[V_v]\log N(v).
\end{align*}
For the general case one has to take limits.
Although, we defined the integral just formally, it has been proven by Chambert-Loir \cite{berkovichchambertloir} that there is indeed a measure $\prod_{i=0}^{d-1}c_1(\overline{L}_{i,v})$ on the Berkovich space $X_{\overline{K}_v}^{\mathrm{an}}$ and by Chambert-Loir and Thuillier \cite{CLTexposition} that formula (\ref{equ_CLT}) holds with these measures. By the product formula the number
$$(\overline{L}_0\cdots\overline{L}_d|Z)=-\sum_{v\in M(K)}\log \|\langle s_0,\dots,s_d\rangle \|_v$$
does not depend on the choice of the $s_i$'s and it coincides with the global adelic intersection number of $\overline{L}_0,\dots,\overline{L}_d$ as constructed by Zhang \cite{smallpointszhang}, see also \cite{yuan2021adelic}.
Equation (\ref{equ_CLT}) globalizes to the following formula
\begin{align}\label{equ_CLT-global}
	(\overline{L}_0\cdots\overline{L}_d|Z)=(\overline{L}_0\cdots\overline{L}_{d-1}|Z\cap \mathrm{div}(s_d))-\sum_{v\in M(K)}\int_{Z_{\overline{K}_v}^{\mathrm{an}}} \log\|s_d\|_v \prod_{i=0}^{d-1}c_1(\overline{L}_{i,v})
\end{align}
for any non-zero rational section $s_d$ of $L_d$. We write $\overline{L}_0\cdots\overline{L}_d=(\overline{L}_0\cdots\overline{L}_d|X)$. Recall from \cite[Proposition 4.1.1]{yuan2021adelic} that
$\overline{L}_0\cdots\overline{L}_d\ge 0$ if $\overline{L}_0,\dots,\overline{L}_d$ are nef.

Adelic intersection numbers can be used to define heights of subvarieties. For any ample line bundle $\overline{L}$ equipped with a semipositive adelic metric and any subvariety $Z\subseteq X$ we set
$$h_{\overline{L}}(Z)=\frac{(\overline{L}^{\dim Z+1}|Z)}{(\dim Z+1)\deg_L(Z)},$$
where $(\overline{L}^{\dim Z+1}|Z)$ denotes the $(\dim Z+1)$-th adelic self-intersection of $\overline{L}$ on $Z$.
For any $1\le i\le \dim Z$ we define the $i$-th essential minimum by
$$e_i(Z,\overline{L})=\sup_{\gf{Y\subseteq Z}{\mathrm{codim}(Y)=i}}\inf_{x\in (Z\setminus Y)(\overline{K})} h_{\overline{L}}(x).$$
It has been proven by Zhang \cite[Theorem 1.10]{smallpointszhang} in the number field case and by Gubler \cite[Section 4]{bogomolovgubler} in the function field case that we have the fundamental inequality
\begin{align}\label{equ_fundamental}
e_1(Z,\overline{L})\ge h_{\overline{L}}(Z)\ge \frac{\sum_{i=1}^{\dim Z+1}e_i(Z,\overline{L})}{\dim Z+1}.
\end{align}

\section{Proof of Theorem \ref{main}}
Let us first prove several lemmas.
For every $m$ and $I=\{i_1,\dots,i_{|I|}\}\subseteq\{1,\dots,m\}$ we write $\pi_{m,I}\colon \mathbb{P}_1^{m}\to \mathbb{P}_1^{|I|}$ for the projection to the $i_1$-th, $\dots$, $i_{|I|}$-th factors. Especially, we write $\pi_{I}:=\pi_{n,I}$. Further, let $p_{m,i}\colon \mathbb{P}_1^{m}\to\mathbb{P}_1^{m-1}$ be the projection forgetting the $i$-th factor for some $i\le m$. Note that $p_{i}=p_{n,i}$.
Our first lemma will allow us to reduce the proof to the case that $p_j(H)=\bP_1^{n-1}$ for all $j\in\{1,\ldots,n\}$. 
 
  \begin{lemma}\label{reduction}
  	For every irreducible hypersurface $H\subseteq \mathbb{P}_1^n$ there exists a subset $$I=\{i_1,\dots,i_k\}\subseteq \{1,\dots,n\}$$ with $|I|=k$, such that $H$ is of the form $H=\pi_{I}^{-1}(H')$, where $H'\subseteq \mathbb{P}_1^{k}$ is an irreducible hypersurface satisfying $p_j(H')=\mathbb{P}_1^{k-1}$ for every $1\le j\le k$.
  \end{lemma}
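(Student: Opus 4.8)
The plan is to analyze for which coordinates the projection $p_j$ fails to be surjective, and show these are exactly the coordinates on which $H$ does not genuinely depend. First I would observe that since $H \subseteq \mathbb{P}_1^n$ is an irreducible hypersurface, it is cut out by a single irreducible multihomogeneous polynomial $F$ in the coordinates $([x_1^{(0)}:x_1^{(1)}], \dots, [x_n^{(0)}:x_n^{(1)}])$, of some multidegree $(e_1, \dots, e_n)$. For a fixed index $j$, the projection $p_j \colon \mathbb{P}_1^n \to \mathbb{P}_1^{n-1}$ restricted to $H$ is dominant if and only if $F$ actually involves the $j$-th pair of variables, i.e. $e_j \geq 1$. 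Indeed, if $e_j = 0$ then $F$ is pulled back from $\mathbb{P}_1^{n-1}$ along $p_j$, so $H = p_j^{-1}(p_j(H))$ with $p_j(H)$ a proper closed subset, hence $p_j(H) \neq \mathbb{P}_1^{n-1}$; conversely, if $e_j \geq 1$, then for a generic point of $\mathbb{P}_1^{n-1}$ the polynomial $F$ becomes a nonzero polynomial of degree $e_j$ in the $j$-th coordinate, which has a root, so the fiber is nonempty and $p_j$ is surjective on $H$ (it is proper, so the image is closed, and it is dense, hence everything).

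Next I would let $I = \{i : e_i \geq 1\} = \{i_1, \dots, i_k\}$ be the set of indices on which $F$ depends. Then $F$ is, up to the obvious identification, the pullback under $\pi_I \colon \mathbb{P}_1^n \to \mathbb{P}_1^k$ of an irreducible multihomogeneous polynomial $F'$ on $\mathbb{P}_1^k$ with all multidegrees positive; irreducibility of $F'$ follows from that of $F$ since $\pi_I^*$ is injective on polynomial rings and an honest factorization of $F'$ would pull back to one of $F$. Set $H' = \{F' = 0\} \subseteq \mathbb{P}_1^k$; this is an irreducible hypersurface and $H = \pi_I^{-1}(H')$ by construction. It remains to check that $p_j(H') = \mathbb{P}_1^{k-1}$ for every $1 \leq j \leq k$: but $H'$ is cut out by $F'$ whose $j$-th multidegree is $e_{i_j} \geq 1$ by the definition of $I$, so the surjectivity criterion from the previous paragraph (applied now on $\mathbb{P}_1^k$) gives exactly this.

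The only genuinely delicate point is the claim that a hypersurface whose defining equation has positive degree in a given variable has surjective projection forgetting that variable — one must be slightly careful that "generic fiber nonempty" upgrades to "every fiber nonempty." This is handled by properness of $p_j$ (it is a base change of $\mathbb{P}_1 \to \operatorname{Spec} K$), which forces the image to be closed; combined with density it must be all of $\mathbb{P}_1^{n-1}$. I expect the rest to be routine bookkeeping about multihomogeneous polynomials and the compatibility of the factorization with the projection $\pi_I$; nothing here requires the arithmetic input, so the lemma is purely a statement of algebraic geometry over $K$.
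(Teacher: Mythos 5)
Your proof is correct, but it takes a different route from the paper's. You work explicitly with the irreducible multihomogeneous polynomial $F$ cutting out $H$ and identify $I$ as the support of its multidegree $(e_1,\dots,e_n)$, using the two observations that $e_j=0$ forces $F$ to be pulled back along $p_j$ and that $e_j\ge 1$ forces every fiber of $p_j|_H$ to be nonempty (the specialized binary form is either identically zero or of positive degree, hence has a projective root over $\overline{K}$ --- in fact this shows every fiber is nonempty directly, so the properness/closed-image step is a safety net rather than a necessity). The paper instead argues by induction on $n$: it picks a $j$ with $p_j(H)\neq\mathbb{P}_1^{n-1}$, observes that $p_j(H)$ is then an irreducible hypersurface in $\mathbb{P}_1^{n-1}$ (a dimension count: the image is a proper closed irreducible subset of dimension at least $\dim H-1=n-2$), applies the inductive hypothesis to it, and upgrades the resulting containment $H\subseteq \pi_I^{-1}(H')$ to equality because both sides are irreducible of codimension one. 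Your approach buys explicitness (the set $I$ is named concretely and both directions of the dominance criterion are proved, whereas the paper only needs and only sketches one direction) at the cost of coordinate bookkeeping and an appeal to the UFD property of the Cox ring of $\mathbb{P}_1^n$ to get the correspondence between irreducible hypersurfaces and irreducible multihomogeneous polynomials; the paper's induction is coordinate-free but leaves the claim that $p_n(H)$ is a hypersurface to the reader. Both arguments are purely geometric/algebraic over $K$, as you note.
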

  \begin{proof}
  	We prove this by induction on $n$. If $p_j(H)=\mathbb{P}_1^{n-1}$ for all $1\le j\le n$, there is nothing to prove. Thus, let $j$ satisfy $p_j(H)\neq \mathbb{P}_1^{n-1}$. By symmetry we may assume $j=n$. Then $p_n(H)\subseteq \mathbb{P}_1^{n-1}$ is an irreducible hypersurface and by induction there is a subset $I\subseteq \{1,\dots, n-1\}$ with $|I|=k$ for some $1\le k\le n-1$ and an irreducible hypersurface $H'\subseteq \mathbb{P}_1^{k}$ such that $p_n(H)=\pi_{n-1,I}^{-1}(H')$. Taking the preimage with respect to $p_n$, we get $H\subseteq \pi_{I}^{-1}(H')$. But $\pi^{-1}_{I}(H')$ is of codimension $1$ and it is irreducible as it is of the form $\pi^{-1}_{I}(H')\cong \mathbb{P}_1^{n-k}\times H'$. Thus, it must hold $H= \pi_{I}^{-1}(H')$.
  \end{proof}
  By Lemma \ref{reduction} we can reduce the proof of Theorem \ref{main} to the case where $p_j(H)=\mathbb{P}_1^{n-1}$ for all $1\le j\le n$. Indeed, if $H=\pi_{I}^{-1}(H')$ with $I\neq \{1,\dots, n\}$, we may assume by symmetry, that $I=\{1,\dots, k\}$ such that $H=H'\times \mathbb{P}_1^{n-k}$. If the theorem holds for $H'$, then it also holds for $H$.

 To fix notation, we let $\overline{L}_i$ be the line bundle $\mathcal{O}(1)$ on $\bP_1$ equipped with canonical adelic metric associated with $f_i$ as described in Section \ref{section-preliminaries}. 
 Using additive notation, we let 
  $$\overline{L} = \pi_1^*\overline{L}_1 + \cdots + \pi_n^*\overline{L}_n,$$
  which is an adelic line bundle on $H$. On points the height $\hat{h}_{f_i}$ coincides with $h_{\overline{L}_i}$ and the height $\hat{h}_{\Phi}$ coincides with $h_{\overline{L}}$. In particular, the height $h_{\overline{L}}$ takes non-negative values at points, such that $e_i(H,\overline{L})\ge 0$ for all $i=1,\ldots,n$
  
   We  record the following consequence of the fundamental inequality (\ref{equ_fundamental}). 
  \begin{lemma}\label{zhang}
  	Under the assumptions of Theorem \ref{main} we have 
  	\begin{align*}
  		\overline{L}^n= \pi_{i_1}^*\overline{L}_{i_1}\cdots \pi_{i_n}^*\overline{L}_{i_n}=0,
  	\end{align*}
  	for every tuple $(i_1,\dots,i_{n})\in \{1,\dots,n\}^n$. 
  \end{lemma}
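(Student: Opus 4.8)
The plan is to derive this purely from the fundamental inequality (\ref{equ_fundamental}) together with the non-negativity of intersection numbers of nef adelic line bundles. To set up, I would first record that $\overline L = \sum_{i=1}^n \pi_i^*\overline L_i$ is the restriction to $H$ of the adelic line bundle $\mathcal O(1,\dots,1)$ on $\mathbb P_1^n$ equipped with the tensor product of the canonical metrics; hence its underlying line bundle is ample on $H$ (a restriction of an ample bundle to a closed subvariety is ample, even though the individual summands $\pi_i^*\overline L_i$ need not be), its adelic metric is semipositive, and each summand $\pi_i^*\overline L_i$ is nef. Since $H$ is a hypersurface we have $\dim H = n-1$ and $\deg_L H>0$, and since $h_{\overline L}$ restricted to points equals $\hat h_\Phi$, which is non-negative, we get $e_i(H,\overline L)\ge 0$ for all $i$.

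Next I would show $e_1(H,\overline L)=0$. Given any proper closed subset $Y\subsetneq H$, genericity of $\{x_\ell\}$ forces all but finitely many $x_\ell$ to lie in $H\setminus Y$, so $\inf_{x\in(H\setminus Y)(\Kbar)} h_{\overline L}(x)\le \hat h_\Phi(x_\ell)$ for all large $\ell$; letting $\ell\to\infty$ and using $\hat h_\Phi(x_\ell)\to 0$ gives $\inf_{x\in(H\setminus Y)(\Kbar)} h_{\overline L}(x)\le 0$. Taking the supremum over $Y$ yields $e_1(H,\overline L)\le 0$, hence $e_1(H,\overline L)=0$. Applying the fundamental inequality (\ref{equ_fundamental}) with $Z=H$,
\[
0=e_1(H,\overline L)\ \ge\ h_{\overline L}(H)\ \ge\ \frac{1}{n}\sum_{i=1}^{n} e_i(H,\overline L)\ \ge\ 0,
\]
so $h_{\overline L}(H)=0$, and since $\deg_L H>0$ this gives $\overline L^{\,n}=(\overline L^{\dim H+1}\mid H)=0$.

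It remains to split $\overline L^{\,n}$ into nef constituents. Expanding by multilinearity of the adelic intersection pairing,
\[
0=\overline L^{\,n}=\Bigl(\sum_{i=1}^n\pi_i^*\overline L_i\Bigr)^{\!n}=\sum_{\substack{a_1+\dots+a_n=n\\ a_i\ge 0}}\binom{n}{a_1,\dots,a_n}\,\bigl((\pi_1^*\overline L_1)^{a_1}\cdots(\pi_n^*\overline L_n)^{a_n}\mid H\bigr).
\]
Each summand is an intersection number of nef adelic line bundles on $H$, hence $\ge 0$ by \cite[Proposition 4.1.1]{yuan2021adelic}, and every multinomial coefficient is positive; therefore each summand vanishes. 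Since the adelic intersection number is symmetric in its entries, the number $\pi_{i_1}^*\overline L_{i_1}\cdots\pi_{i_n}^*\overline L_{i_n}$ attached to a tuple $(i_1,\dots,i_n)\in\{1,\dots,n\}^n$ coincides with the summand for the exponent vector $a_i=\#\{k:i_k=i\}$, and hence equals $0$.

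I do not expect a genuine obstacle here; the content is simply repackaging the hypotheses into the form demanded by (\ref{equ_fundamental}). The two points that merit a sentence of care are the ampleness of $\overline L$ on $H$ (not of the individual pullbacks), and the bookkeeping that passes from the multinomial expansion, indexed by exponent vectors, to arbitrary ordered tuples via symmetry of the intersection pairing.
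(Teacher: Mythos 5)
Your proposal is correct and follows essentially the same route as the paper: deduce $e_1(H,\overline L)=0$ from the generic small sequence, apply the fundamental inequality to get $\overline L^{\,n}=0$, and then expand by multilinearity into non-negative terms (by nefness of the $\pi_i^*\overline L_i$) which must all vanish. The extra care you take with the sandwich $0=e_1\ge h_{\overline L}(H)\ge \frac{1}{n}\sum_i e_i\ge 0$ and with passing from exponent vectors to ordered tuples is just a fuller writing-out of the paper's argument.
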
     
  
  \begin{proof}
  	Our assumption on the existence of a generic small sequence yields that $e_i(H,\overline{L})=0$ for all $i=1,\ldots,n$. 
  	Since $\overline{L}$ is an ample line bundle equipped with a semipositive adelic metric, the fundamental inequality (\ref{equ_fundamental}) yields 
  	$$0=(\dim H+1)\deg_L(H) h_{\overline{L}}(H)=\overline{L}^n=(\pi_1^*\overline{L}_1 + \cdots + \pi_n^*\overline{L}_n)^n.$$
  	As the $\overline{L}_i$'s are nef, their pullbacks by $\pi_i$ are also nef. Thus, the power on the right hand side decomposes as a sum of non-negative terms. Hence all terms vanish. 
  \end{proof}     
  
  We apply arithmetic equidistribution to infer the following. 
\begin{lemma}\label{measures} Let $H$ be as in Theorem \ref{main} and assume further that $p_j(H) = \mathbb{P}_1^{n-1}$ for all $j \in \{1, \dots, n\}$. There exist $c_{jk} \in \mathbb{Q}^*$ for $j,k \in \{1, \dots, n\} $  such that
	$$\prod_{i = 1, i \neq j}^nc_1(\pi_i^*\overline{L}_{i,v}) = c_{jk}\prod_{i = 1, i \neq k}^nc_1(\pi_i^*\overline{L}_{i,v})$$ 
	for any $v\in M(K)$.
\end{lemma}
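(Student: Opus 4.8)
The plan is to use the arithmetic equidistribution theorem of Yuan and Gubler to identify the measures $\prod_{i\neq j}c_1(\pi_i^*\overline{L}_{i,v})$ on $H_{\overline{K}_v}^{\mathrm{an}}$ with the pushforward (along $\pi_{\{1,\dots,n\}\setminus\{j\}}$, which under the hypothesis $p_j(H)=\mathbb{P}_1^{n-1}$ is a finite surjection of degree equal to $\deg_{\pi_j^*\overline{L}_j}(H)$) of the canonical equilibrium measure on $(\mathbb{P}_1^{n-1})_{\overline{K}_v}^{\mathrm{an}}$ attached to $\Phi$ restricted to the remaining $n-1$ coordinates. First I would note that Lemma \ref{zhang} gives $\overline{L}^n=0$, and more precisely that each term $\pi_{i_1}^*\overline{L}_{i_1}\cdots\pi_{i_n}^*\overline{L}_{i_n}$ vanishes; expanding the binomial and using nefness, in particular $\pi_i^*\overline{L}_i\cdot\prod_{k\neq j}\pi_k^*\overline{L}_k=0$ on $H$ for every $i$. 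This vanishing, together with the existence of the generic small sequence $\{x_\ell\}$, is exactly the input needed to run equidistribution: the sequence $\{x_\ell\}$ equidistributes, at every place $v$, toward the measure $\mu_{\overline{L},v}=\frac{1}{\deg_L(H)}\prod_{i=1}^n$ (suitably normalized) --- but I want the lower-dimensional statement, so I would instead equidistribute with respect to the auxiliary nef metrized line bundle $\overline{M}_j:=\sum_{i\neq j}\pi_i^*\overline{L}_i$ on $H$, which by the vanishing above has $\overline{M}_j^n=0$ and is big (since $p_j$ is surjective, $\overline{M}_j$ is the pullback of an ample bundle under a finite map, hence big and nef on $H$). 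Equidistribution then yields that $\{x_\ell\}$ converges weakly at each $v$ to the probability measure $\frac{1}{\deg_{M_j}(H)}\prod_{i\neq j}c_1(\pi_i^*\overline{L}_{i,v})$.

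The key step is then that the limiting measure does not depend on $j$. Since the same generic sequence $\{x_\ell\}$ is used for every $j$, and equidistribution produces a unique weak limit, we get
\begin{align*}
\frac{1}{\deg_{M_j}(H)}\prod_{i\neq j}c_1(\pi_i^*\overline{L}_{i,v})=\lim_{\ell\to\infty}\frac{1}{\#O(x_\ell)}\sum_{y\in O(x_\ell)}\delta_y=\frac{1}{\deg_{M_k}(H)}\prod_{i\neq k}c_1(\pi_i^*\overline{L}_{i,v})
\end{align*}
for every pair $j,k$ and every $v\in M(K)$, where $O(x_\ell)$ denotes the Galois orbit. Setting $c_{jk}=\deg_{M_j}(H)/\deg_{M_k}(H)$, which lies in $\mathbb{Q}^*$ because these intersection degrees are positive integers (positivity using that $M_j$ is big and nef, so its top self-intersection degree on $H$ is positive), gives the claim. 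One subtlety I need to address: the hypotheses of the Yuan--Gubler equidistribution theorem require not just $\overline{M}_j^n=0$ but that $\overline{M}_j$ be ample (or big and nef, in the strengthened versions), and that the essential minimum equal the normalized height; I would check that $e_1(H,\overline{M}_j)=0$, which follows from $\hat h_{f_i}(\pi_i(x_\ell))\to 0$ for all $i$ (so $h_{\overline{M}_j}(x_\ell)\to 0$, whence $e_1\le 0$) combined with the fundamental inequality (\ref{equ_fundamental}) applied to $\overline{M}_j$ giving $h_{\overline{M}_j}(H)=0$ and $e_1\ge 0$ from non-negativity of the $\hat h_{f_i}$ at points.

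The main obstacle I anticipate is the big-but-not-ample issue: $\overline{M}_j=\sum_{i\neq j}\pi_i^*\overline{L}_i$ is only the pullback of an ample class along the finite morphism $\pi_{\{1,\dots,n\}\setminus\{j\}}|_H$, hence big and nef but generally not ample on $H$, so I cannot invoke the classical form of equidistribution directly. I would resolve this either by appealing to the extension of arithmetic equidistribution to big and nef adelic line bundles (available in Yuan's framework and in Gubler's function-field treatment), or by pushing the whole computation forward to $(\mathbb{P}_1^{n-1})_{\overline{K}_v}^{\mathrm{an}}$: the image sequence $\{\pi_{\{1,\dots,n\}\setminus\{j\}}(x_\ell)\}$ is generic of small height there with respect to the ample $\sum_{i\neq j}\overline{L}_i$, classical equidistribution applies, and the $\prod_{i\neq j}c_1(\pi_i^*\overline{L}_{i,v})$ is the pullback of that equilibrium measure scaled by the (constant, since $p_j$ surjective) local degree of the map; comparing two such descriptions for $j$ and $k$ via the common sequence again yields the proportionality with a rational constant. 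I would also double check the rationality of $c_{jk}$: the intersection numbers $\deg_{M_j}(H)=\big(\sum_{i\neq j}\pi_i^*L_i\big)^n\cdot[H]$ are computed purely geometrically on $\mathbb{P}_1^n$ and are positive integers, so their ratios are in $\mathbb{Q}^*$.
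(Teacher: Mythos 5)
Your overall strategy coincides with the paper's: apply Yuan--Gubler equidistribution to the auxiliary bundles $\overline{M}_j=\sum_{i\neq j}\pi_i^*\overline{L}_i$, use the common generic small sequence to identify the limit measures, and set $c_{jk}=\deg_{M_j}(H)/\deg_{M_k}(H)$. You also correctly locate the one genuine difficulty, namely that $M_j$ is only big and nef on $H$, not ample, so the equidistribution theorems as cited (Yuan's Theorem~3.1, Gubler's Theorem~6.3) do not apply to $\overline{M}_j$ directly. But neither of your two proposed workarounds closes this gap. Option (a) is a citation to a theorem you have not verified exists in the needed generality; the cited results require an ample polarization, and ``available in Yuan's framework'' is not an argument. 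Option (b) is actually flawed: equidistribution on the base $\mathbb{P}_1^{n-1}$ only tells you that the \emph{pushforwards} $(p_j)_*\delta_{O(x_\ell)}$ converge to $\prod_{i\neq j}c_1(\overline{L}_{i,v})$ (normalized), and a measure on $H$ is not determined by its pushforward under a generically finite map --- there is no canonical ``pullback of the equilibrium measure scaled by the local degree'' that you can identify with $\prod_{i\neq j}c_1(\pi_i^*\overline{L}_{i,v})$ without already knowing equidistribution on $H$ itself. Moreover, even granting such an identification for each $j$ separately, the two measures would be pullbacks from two \emph{different} projections, which yields no comparison between them as measures on $H$.

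The missing idea is a perturbation: for rational $\epsilon>0$ set $\overline{M}_{j,\epsilon}=\epsilon\,\pi_j^*\overline{L}_j+\sum_{i\neq j}\pi_i^*\overline{L}_i$. Its underlying bundle is ample (ample plus nef), its metric is semipositive, the sequence $\{x_\ell\}$ is still generic and small for it, and $\overline{M}_{j,\epsilon}^{\,n}=0$ by Lemma~\ref{zhang}, so the equidistribution theorems apply verbatim and give, for all $j,k$ and all $\epsilon>0$,
\begin{align*}
\frac{c_1(\overline{M}_{j,\epsilon,v})^{n-1}}{\deg_{M_{j,\epsilon}}(H)}=\frac{c_1(\overline{M}_{k,\epsilon,v})^{n-1}}{\deg_{M_{k,\epsilon}}(H)}.
\end{align*}
Both sides are polynomial in $\epsilon$ (multilinearity of $c_1$-products and of degrees), so one may let $\epsilon\to 0$; positivity of $\deg_{M_{j,0}}(H)$ (bigness, exactly as you argued) keeps the denominators nonzero, and dividing out the common factor $(n-1)!$ from $c_1(\overline{M}_{j,0,v})^{n-1}=(n-1)!\prod_{i\neq j}c_1(\pi_i^*\overline{L}_{i,v})$ gives the lemma. (Incidentally, your displayed limit measure $\frac{1}{\deg_{M_j}(H)}\prod_{i\neq j}c_1(\pi_i^*\overline{L}_{i,v})$ is off by this factor $(n-1)!$ from being a probability measure, though that does not affect $c_{jk}$.)
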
 
\begin{proof} For every rational number $\epsilon$ and every $j\in \{1,\dots,n\}$ we define the adelic $\mathbb{Q}$-line bundle
     $$\overline{M}_{j,\epsilon}=\epsilon \pi_j^* \overline{L}_i+\sum_{i=1,i\neq j}^n\pi_i^*\overline{L}_i.$$
     Since $\sum_{i=1}^n\pi_i^*L_i$ is ample as the restriction of an ample line bundle on $\mathbb{P}_1^n$ to $H$ and every $\pi_i^*L_i$ is nef as a pull back of an ample line bundle, we get that the underlying line bundle $M_{j,\epsilon}$ of each $\overline{M}_{j,\epsilon}$ is ample if $\epsilon>0$. As a positive linear combination of semipositive metrics, the metric on $\overline{M}_{j,\epsilon}$ is also semipositive for $\epsilon>0$. 
     The small sequence $\{x_\ell\}\subseteq H(\overline{K})$ with respect to $\overline{L}$ is also a small sequence with respect to $\overline{M}_{j,\epsilon}$ for all $j\in \{1,\dots,n\}$ and rationals $\epsilon>0$. By Lemma \ref{zhang} $\overline{M}_{j,\epsilon}^n=0$. Thus it follows from  Yuan's equidistribution theorem \cite[Theorem~3.1]{BiglinebundlesYuan} (for the case of number fields) or Gubler's equidistribution theorem \cite[Theorem~6.3]{Gubler:equidistribution} (for the case of function fields), that 
     $$\frac{c_1(\overline{M}_{j,\epsilon,v})^{n-1}}{\mathrm{deg}_{M_{j,\epsilon}}(H)}=\frac{c_1(\overline{L}_v)^{n-1}}{\mathrm{deg}_{L}(H)}=\frac{c_1(\overline{M}_{k,\epsilon,v})^{n-1}}{\mathrm{deg}_{M_{k,\epsilon}}(H)}$$
     for all $j,k\in \{1,\dots,n\}$ and all rationals $\epsilon>0$. By the assumption that $\pi_j(H) =\mathbb{P}_1^{n-1}$ for all $j\in\{1,\dots,n\}$, the line bundle $M_{j,0}$ is big as a pull back via a birational morphism of an ample line bundle. This means that $\deg_{M_{j,0}}(H)>0$. Note that $c_1(\overline{M}_{j,\epsilon,v})^{n-1}$ and $\deg_{M_{j,\epsilon}}(H)$ are continuous in $\epsilon$, as they are induced by multi-linear forms. Hence, we get
     \begin{align*}
     c_1(\overline{M}_{j,0,v})^{n-1}&=\lim_{\epsilon\to 0} c_1(\overline{M}_{j,\epsilon,v})^{n-1}=\lim_{\epsilon\to 0}\frac{\deg_{M_{j,\epsilon}}(H) c_1(\overline{M}_{k,\epsilon,v})^{n-1}}{\deg_{M_{k,\epsilon}}(H)}=\frac{\deg_{M_{j,0}}(H)}{\deg_{M_{k,0}}(H)}c_1(\overline{M}_{k,0,v})^{n-1}
     \end{align*}
     for all $j,k\in\{1,\dots,n\}$.
     Setting $c_{jk}=\frac{\deg_{M_{j,0}}(H)}{\deg_{M_{k,0}}(H)}$ the lemma follows, as we have
     $$\prod_{i=1,i\neq j}^n c_1(\pi_i^*\overline{L}_{i,v})=\frac{c_1(\overline{M}_{j,0,v})^{n-1}}{(n-1)!} =c_{jk} \frac{c_1(\overline{M}_{k,0,v})^{n-1}}{(n-1)!}=c_{jk}\prod_{i=1,i\neq k}^nc_1(\pi_i^*\overline{L}_{i,v}),$$
     since $c_1(\pi_i^*\overline{L}_{i,v})^2 \equiv 0$ for all $i \in\{1, \dots, n\}$. 
\end{proof}

In what follows let $a \in \mathbb{P}_1(\overline{K})$. After replacing $K$ by a finite extension we may assume that $a \in \mathbb{P}_1(K)$. We let $1_a$ be the canonical section of $\mathcal{O}_{\mathbb{P}_1}(a)$ vanishing  on $a$. 
The following lemma is the first consequence of formula (\ref{equ_CLT-global}) we plan to use. 
\begin{lemma} \label{integrals}
Under the assumptions of Theorem \ref{main}, assume further that $p_j(H) = \mathbb{P}_1^{n-1}$ for all $j \in \{1, \dots, n\}$. 
Let $i\in\{1,\ldots,n\}$ and $k\in\{1,\ldots,n\}\setminus\{i\}$. If  $a\in\bP_1(K)$ is such that 
$\hat{h}_{f_i}(a)=0$, 
then 
	$$ \sum_{v \in M(K)}\int_{H^{\mathrm{an}}_{\overline{K}_v}}\log ||\pi_i^*1_a||_{\pi_i^{*}\overline{L}_{i,v}}\prod_{j = 1, j \neq k}^{n}c_1(\pi_j^*\overline{L}_{j,v}) = 0.$$
\end{lemma}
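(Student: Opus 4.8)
The plan is to evaluate the sum of local integrals by recognizing it as a difference of two global adelic intersection numbers via formula (\ref{equ_CLT-global}), and then to show these two intersection numbers are both zero using Lemma \ref{zhang} together with the fact that $\hat h_{f_i}(a)=0$.

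\medskip

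First I would apply the global induction formula (\ref{equ_CLT-global}) to the section $s = \pi_i^*1_a$ of $\pi_i^*L_i = \pi_i^*\mathcal{O}_{\bP_1}(a)$ on $H$, with the remaining $n-1$ semipositive line bundles taken to be $\pi_j^*\overline{L}_j$ for $j \neq k$. (Here $i \neq k$, so $\pi_i^*\overline{L}_i$ does not appear among the $n-1$ factors, and $\mathrm{div}(s) = \pi_i^{-1}(a)$ has codimension $1$ in $H$ provided $\pi_i\colon H\to\bP_1$ is dominant; this follows since $p_k(H)=\bP_1^{n-1}$ forces $\pi_i(H)=\bP_1$ for $i\neq k$.) This gives
\begin{align*}
\sum_{v\in M(K)}\int_{H^{\mathrm{an}}_{\overline{K}_v}}\log\|\pi_i^*1_a\|_{\pi_i^*\overline{L}_{i,v}}\prod_{j=1,j\neq k}^n c_1(\pi_j^*\overline{L}_{j,v}) = \Big(\prod_{j\neq k}\pi_j^*\overline{L}_j \,\Big|\, H\cap\mathrm{div}(\pi_i^*1_a)\Big) - \Big(\pi_i^*\overline{L}_i\prod_{j\neq k}\pi_j^*\overline{L}_j \,\Big|\, H\Big).
\end{align*}
The second term on the right is an adelic intersection number of $n$ nef line bundles on the $(n-1)$-dimensional $H$, and by Lemma \ref{zhang} it equals $\pi_i^*\overline{L}_i\cdot\prod_{j\neq k}\pi_j^*\overline{L}_j = 0$ (every such top intersection of the $\pi_j^*\overline{L}_j$ vanishes). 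So it remains to show the first term vanishes.

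\medskip

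For the first term, note $H\cap\mathrm{div}(\pi_i^*1_a) = H\cap\pi_i^{-1}(a)$, which (again using $p_k(H)=\bP_1^{n-1}$, hence $p_k$ is dominant on every component) is a pure-codimension-one subvariety. On each of its irreducible components $Z$, the restriction of $\pi_i^*\overline{L}_i$ is the canonically metrized bundle pulled back from the point $a$; since $\hat h_{f_i}(a)=0$, the bundle $\pi_i^*\overline{L}_i|_Z$ is numerically trivial as an adelic line bundle (its metric is, up to scaling, the pullback of the trivial metric on the height-zero point $a$, so its adelic degree against anything is zero). Concretely, $\big(\prod_{j\neq k}\pi_j^*\overline{L}_j\mid Z\big)$ can be compared with $\big(\pi_i^*\overline{L}_i\prod_{j\neq k, j\neq i}\pi_j^*\overline{L}_j\mid Z'\big)$-type expressions, but more cleanly: the whole intersection number $\big(\prod_{j\neq k}\pi_j^*\overline{L}_j\mid H\cap\pi_i^{-1}(a)\big)$ is, up to the (rational, nonzero by Lemma \ref{measures}-type reasoning) degree factor, a height of the point $a$ with respect to $\overline{L}_i$, weighted by a nef intersection number, hence equals a nonnegative multiple of $\hat h_{f_i}(a) = 0$. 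Alternatively one uses Lemma \ref{measures} to replace $\prod_{j\neq k}c_1(\pi_j^*\overline{L}_{j,v})$ by $c_{ki}\prod_{j\neq i}c_1(\pi_j^*\overline{L}_{j,v})$, turning the integrand into $\log\|\pi_i^*1_a\|$ integrated against a measure that is a pullback under $\pi_i$, so the sum over $v$ collapses to $c_{ki}\cdot\deg(\cdots)\cdot\hat h_{f_i}(a) = 0$ by definition of the canonical height.

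\medskip

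The main obstacle I anticipate is the bookkeeping in the second approach: making precise that $\prod_{j\neq i}c_1(\pi_j^*\overline{L}_{j,v})$ is the pullback via $\pi_i$ of the product of curvature forms/measures on $\bP_1^{n-1}$ pushed forward appropriately, so that $\int_{H^{\mathrm{an}}}\log\|\pi_i^*1_a\|_{\pi_i^*\overline{L}_{i,v}}\,(\cdots) = \deg_{\text{(something)}}(H)\cdot\int_{(\bP_1)^{\mathrm{an}}_{\overline{K}_v}}\log\|1_a\|_{\overline{L}_{i,v}}\,c_1(\overline{L}_{i,v})$; this is a projection-formula computation for local heights, valid because $\pi_i$ is flat and $c_1(\pi_j^*\overline{L}_j)^2\equiv 0$. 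Once that identity is in place, summing over $v\in M(K)$ gives exactly (a rational multiple of) the global canonical height $\hat h_{f_i}(a)$, which is zero by hypothesis, and the lemma follows. A cleaner route avoiding Lemma \ref{measures} is simply to iterate (\ref{equ_CLT-global}) so that all factors $\pi_j^*\overline{L}_j$ ($j\neq i,k$) are also cut down to divisors not meeting $\pi_i^{-1}(a)\cap H$ transversally in the relevant locus, reducing to a zero-dimensional intersection where the local term at each place is $\log\|1_a\|_{\overline{L}_{i,v}}$ evaluated at $a$, whose sum over places is $\hat h_{f_i}(a)=0$.
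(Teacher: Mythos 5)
Your proposal is correct in its main line and shares the paper's skeleton for the first half: both apply the induction formula (\ref{equ_CLT-global}) to the section $\pi_i^*1_a$ against the $n-1$ bundles $\pi_j^*\overline{L}_j$, $j\neq k$, so that the sum of integrals equals $\bigl(\prod_{j\neq k}\pi_j^*\overline{L}_j\mid H\cap\mathrm{div}(\pi_i^*1_a)\bigr)-\bigl(\pi_i^*\overline{L}_i\cdot\prod_{j\neq k}\pi_j^*\overline{L}_j\mid H\bigr)$, and both kill the second term by Lemma \ref{zhang}. Where you genuinely diverge is the vanishing of the first term. The paper shows that $H\cap\{x_i=a\}$ contains a generic small sequence for $\sum_{j\neq k}\pi_j^*\overline{L}_j$ (using $\hat h_{f_i}(a)=0$, height-zero points in the remaining coordinates, and dominance of $p_k$), and then invokes the fundamental inequality (\ref{equ_fundamental}) together with nefness as in Lemma \ref{zhang}. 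You instead observe that, since $i\neq k$, the factor $\pi_i^*\overline{L}_i$ occurs in the product and restricts on $\pi_i^{-1}(a)$ to the pullback of the arithmetic-degree-zero bundle $\overline{L}_i|_a$. This is valid and arguably more economical: it needs no new generic sequence and no appeal to (\ref{equ_fundamental}). To make it precise without importing a projection formula the paper never states, apply (\ref{equ_CLT-global}) once more on each component $Z$ of $H\cap\pi_i^{-1}(a)$ with last bundle $\pi_i^*\overline{L}_i$ and section $\pi_i^*1_b$ for some $b\neq a$: its divisor misses $Z$ and its norm is the constant $\|1_b(a)\|_v$, so the intersection number equals $\bigl(-\sum_v\log\|1_b(a)\|_v\bigr)$ times the geometric degree of $Z$ with respect to the remaining bundles, i.e.\ a multiple of $\hat h_{f_i}(a)=0$. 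One caution: your ``alternative'' route via Lemma \ref{measures} is flawed as written. The measure $\prod_{j\neq i}c_1(\pi_j^*\overline{L}_{j,v})$ on $H$ is \emph{not} a pullback under $\pi_i$ (it is built from the coordinates other than $i$), and identifying its pushforward $(\pi_i)_*$ with a multiple of $c_1(\overline{L}_{i,v})$ is not a flatness or $c_1^2\equiv 0$ triviality; it would require an additional equidistribution argument for the projected sequence $\pi_i(x_\ell)$. Keep the primary argument and drop that aside.
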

\begin{proof} 
For symmetry reasons we may assume that $i = 1, k = n$. 
By Lemma \ref{zhang} and Equation (\ref{equ_CLT-global}) we get 
\begin{align*}
0&=\pi_1^*\overline{L}_1\cdots \pi_{n-1}^*\overline{L}_{n-1}\cdot\pi_1^*\overline{L}_1 \\
&=  (\pi_1^*\overline{L}_1\cdots \pi_{n-1}^*\overline{L}_{n-1}|\mathrm{div}(\pi_1^*1_a))
- \sum_{v \in M(K)}\int_{H^{\mathrm{an}}_{\overline{K}_v}}\log ||\pi_1^*1_a||_{\pi_1^{*}\overline{L}_{1,v}}\prod_{j = 1}^{n-1}c_1(\pi_j^*\overline{L}_{j,v}).
\end{align*}
     By the fundamental inequality \eqref{equ_fundamental} and arguing as in Lemma \ref{zhang} it also holds that 
	$$ (\pi_1^*\overline{L}_1\cdots \pi_{n-1}^*\overline{L}_{n-1}|\mathrm{div}(\pi_1^*1_a)) = 0$$
	because $\mathrm{div}(\pi_1^*1_a)=H \cap  \{x_1 = a\}$ contains a generic small sequence with respect to $\pi_1^*\overline{L}_1 + \cdots + \pi_{n-1}^*\overline{L}_{n-1}$ as $\hat{h}_{f_1}(a)=0$ and the projection of $H$ to its first $n-1$ coordinates is dominant. 
    The lemma follows.
\end{proof}
  \begin{proposition}\label{inductionstep} 
  	Let $H$ be as in Theorem \ref{main} and assume further that $p_j(H) = \mathbb{P}_1^{n-1}$ for all $j \in \{1, \dots, n\}$. 
    Let $i\in\{1,\ldots,n\}$. 
    If $a \in \mathbb{P}_1(K)$ satisfies $\hat{h}_{f_i}(a)=0$, then 
  	$(\overline{L}^{n-1}| \mathrm{div}(\pi_i^*1_a))=0 .$
  \end{proposition}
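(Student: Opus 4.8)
The plan is to convert the intersection number $(\overline{L}^{n-1}| \mathrm{div}(\pi_i^*1_a))$ into a sum of local integrals by one more application of formula~(\ref{equ_CLT-global}), and then to evaluate that sum to zero using Lemmas~\ref{zhang}, \ref{integrals} and \ref{measures}. Since $p_j(H)=\mathbb{P}_1^{n-1}$ for some $j\ne i$, we have $H\not\subseteq\{x_i=a\}$, so $\mathrm{div}(\pi_i^*1_a)=H\cap\{x_i=a\}$ is a proper effective divisor on $H$; if $a\notin\pi_i(H)$ this divisor is empty and the claim is trivial, so I may assume it is a genuine divisor of dimension $n-2$ (and that $n\ge 2$).

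\textbf{Step 1: turn the intersection number into integrals.} The first step is to note that the $n$-fold self-intersection $\overline{L}^{n-1}\cdot\pi_i^*\overline{L}_i$ on $H$ vanishes: expanding $\overline{L}=\sum_{m}\pi_m^*\overline{L}_m$ writes it as a sum of terms $\pi_{i_1}^*\overline{L}_{i_1}\cdots\pi_{i_{n-1}}^*\overline{L}_{i_{n-1}}\cdot\pi_i^*\overline{L}_i$, each of which is zero by Lemma~\ref{zhang}. I would then apply~(\ref{equ_CLT-global}) on $Z=H$ with $\overline{L}_0=\cdots=\overline{L}_{n-2}=\overline{L}$, $\overline{L}_{n-1}=\pi_i^*\overline{L}_i$, and the rational section $s_{n-1}=\pi_i^*1_a$ of $\pi_i^*L_i$, obtaining
\begin{align*}
0 = \overline{L}^{n-1}\cdot\pi_i^*\overline{L}_i &= (\overline{L}^{n-1}| \mathrm{div}(\pi_i^*1_a)) \\
&\qquad - \sum_{v\in M(K)}\int_{H^{\mathrm{an}}_{\overline{K}_v}}\log ||\pi_i^*1_a||_{\pi_i^{*}\overline{L}_{i,v}}\, c_1(\overline{L}_v)^{n-1}.
\end{align*}
Thus $(\overline{L}^{n-1}| \mathrm{div}(\pi_i^*1_a))$ equals the displayed sum of integrals, and it remains to show this sum vanishes.

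\textbf{Step 2: evaluate the integrals.} For the last step I would expand $c_1(\overline{L}_v)^{n-1}=(\sum_m c_1(\pi_m^*\overline{L}_{m,v}))^{n-1}$. Since $c_1(\pi_m^*\overline{L}_{m,v})^2\equiv 0$ for every $m$ (as recorded in the proof of Lemma~\ref{measures}), only the square-free monomials survive and
\begin{align*}
c_1(\overline{L}_v)^{n-1} = (n-1)!\sum_{k=1}^{n}\ \prod_{j=1,\, j\ne k}^{n}c_1(\pi_j^*\overline{L}_{j,v}),
\end{align*}
so it suffices to prove $\sum_{v\in M(K)}\int_{H^{\mathrm{an}}_{\overline{K}_v}}\log ||\pi_i^*1_a||_{\pi_i^{*}\overline{L}_{i,v}}\prod_{j=1,\,j\ne k}^n c_1(\pi_j^*\overline{L}_{j,v})=0$ for each $k\in\{1,\dots,n\}$. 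For $k\ne i$ this is exactly Lemma~\ref{integrals} (applied with our $i$, $k$ and $a$, using $\hat{h}_{f_i}(a)=0$). For the remaining index $k=i$ the factor $c_1(\pi_i^*\overline{L}_{i,v})$ is absent from the product; I would pick any $k'\in\{1,\dots,n\}\setminus\{i\}$ and use Lemma~\ref{measures} to rewrite $\prod_{j\ne i}c_1(\pi_j^*\overline{L}_{j,v})=c_{ik'}\prod_{j\ne k'}c_1(\pi_j^*\overline{L}_{j,v})$ with $c_{ik'}\in\mathbb{Q}^*$, which turns this integral into $c_{ik'}$ times the $k=k'$ integral, already known to be $0$. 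Summing over $k$ then gives $(\overline{L}^{n-1}| \mathrm{div}(\pi_i^*1_a))=0$, as desired.

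\textbf{Main obstacle.} The one genuinely delicate point will be the diagonal summand $k=i$: Lemma~\ref{integrals} does not apply to it directly, because the product of curvature forms there always contains the factor $c_1(\pi_i^*\overline{L}_{i,v})$, while the $k=i$ term omits it; it is precisely the equidistribution input packaged in Lemma~\ref{measures} that lets us replace the missing factor and recover a term of the form handled by Lemma~\ref{integrals}. Everything else is a direct manipulation of formula~(\ref{equ_CLT-global}) together with the vanishing supplied by Lemma~\ref{zhang}.
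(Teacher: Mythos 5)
Your proposal is correct and follows essentially the same route as the paper: apply formula~(\ref{equ_CLT-global}) to $\overline{L}^{n-1}\cdot\pi_i^*\overline{L}_i$ with the section $\pi_i^*1_a$, expand $c_1(\overline{L}_v)^{n-1}$ into the $(n-1)!\sum_k\prod_{j\ne k}$ form, kill the $k\ne i$ terms by Lemma~\ref{integrals}, and handle the $k=i$ term by first substituting via Lemma~\ref{measures}. The paper compresses all of this into one display plus the phrase ``combine Lemmas \ref{zhang}, \ref{measures} and \ref{integrals}''; your write-up simply makes the combination explicit, including the one delicate point (the $k=i$ summand) that the paper leaves implicit.
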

\begin{proof} 
Let  $i\in\{1,\ldots,n\}$ and $a\in \mathbb{P}_1(K)$ with $\hat{h}_{f_i}(a)=0$.
By equation (\ref{equ_CLT-global}) we infer 
\begin{align*}
  \overline{L}^{n-1}\cdot\pi_i^{*}\overline{L}_i =  (\overline{L}^{n-1}|\mathrm{div}(\pi_i^*1_a))  -(n-1)!\sum_{k=1}^n\sum_{v \in M(K)}\int_{H^{\mathrm{an}}_{\overline{K}_v}}\log \|\pi_i^*1_a\|_{\pi_i^{*}\overline{L}_i ,v}\prod_{j \neq k }c_1(\pi_j^*\overline{L}_{j,v}). 
  \end{align*}
The proposition now follows combining Lemmas \ref{zhang}, \ref{measures} and \ref{integrals}. 
\end{proof}

We are now ready to prove Theorem \ref{main}.

\begin{proof}[Proof of Theorem \ref{main}:] 
Let $H$ be as in the statement of the theorem.
By the remark after Lemma \ref{reduction} we may assume that $p_i(H) = \mathbb{P}_1^{n-1}$ for all $i\in \{1, \dots,n\}$. We will argue by induction on $n$. For $n=1$ the statement is trivial. Let $n\ge 2$.
Permuting the coordinates, we may assume $j=1$. Recall that
	$$E_1(\overline{K})=\{(x_1,\dots,x_n)\in H(\Kbar)~|~ \mathbb{P}_1(\overline{K})\times (x_2,\dots,x_{n})\subseteq H(\overline{K})\}.$$
	We fix an $a=(a_1,\dots,a_n)\in H(\overline{K})\setminus E_1(\overline{K})$,
    such that $\hat{h}_{f_2}(a_2)= \cdots= \hat{h}_{f_{n}}(a_{n})=0$. If this does not exist, there is nothing to prove. We have to show that also $\hat{h}_{f_1}(a_1)=0$. From Proposition \ref{inductionstep} it follows that $(\overline{L}^{n-1}|_{\text{div}(\pi_n^* 1_{a_n})}) = 0$. By the fundamental inequality (\ref{equ_fundamental}) we infer that $H \cap \{x_n = a_n\}$ contains a generic small sequence with respect to $\pi_1^{*}\overline{L}_1+\cdots+\pi_{n-1}^{*}\overline{L}_{n-1}$. As $H\cap \{x_n=a_n\}$ lies in the image of the morphism
	$$\varphi\colon \mathbb{P}_1^{n-1}\to\mathbb{P}_1^n,\qquad (x_1,\dots,x_{n-1})\mapsto (x_1,\dots,x_{n-1},a_n),$$
	we may consider it as a hypersurface $H_1=\varphi^{-1}(H\cap \{x_n=a_n\})\subseteq \mathbb{P}_1^{n-1}$. Let $H_1=\bigcup_{i=1}^r H_{1,i}$ be the decomposition of $H_1$ into irreducible components. Let $H_{1,j}$ be a component such that $(a_1,\dots,a_{n-1})\in H_{1,j}$. It also holds that $\mathbb{P}_1(\overline{K})\times (a_2, \dots, a_{n-1}) $ is not contained in $H_{1,j}(\overline{K})$ since otherwise $(a_1, \dots, a_{n}) \in E_1(\overline{K})$. In order to check that $H_{1,j}$ satisfies the assumptions of Theorem \ref{main} we note that if we had $p_{n-1,1}(H_{1,j})\neq \mathbb{P}_1^{n-2}$, then we would have $H_{1,j}=\mathbb{P}_1\times H'$ for some irreducible hypersurface $H'\subseteq \mathbb{P}_1^{n-2}$. But this contradicts that $\mathbb{P}_1(\overline{K})\times(a_2, \dots, a_{n-1})$ is not contained in $H_{1,j}(\overline{K})$. Hence, we have $p_{n-1,1}(H_{1,j})=\mathbb{P}_1^{n-2}$, such that we can apply the theorem to $H_{1,j}$ by induction. This yields that $\hat{h}_{f_1}(a_1)=0$ as desired.
\end{proof}

\section*{Acknowledgements}
N.M.M. acknowledges the support from NSF grant DMS-2200981. R.W. acknowledges support from the SNF grant ``Diophantine Equations: Special Points, Integrality, and Beyond'' (n$^\circ$ 200020\_184623).

\bibliography{Galoisbounds}
\bibliographystyle{alpha}

\end{document}